\renewcommand{\a}{\alpha}
\renewcommand{\o}{\omega}
\renewcommand{\d}{\delta}
\newcommand{\e}{\varepsilon}
\newcommand{\g}{\gamma}
\newcommand{\s}{\sigma}
\newcommand{\tr}{\mathop{\mathrm{tr}}\nolimits}
\newcommand{\cH}{{\mathcal H}}
\newcommand{\cL}{{\mathcal L}}
\newcommand{\bZ}{{\mathbb Z}}
\newcommand{\fG}{{\mathfrak g}}
\newtheorem{theorem}{Theorem}[section]
\newtheorem{lemma}[theorem]{Lemma} 
\theoremstyle{definition}   
\begin{document}

\title[Distances between Matrix Algebras]{Distances between
matrix algebras \\ that converge to coadjoint orbits }
\author{Marc A. Rieffel}
\address{Department of Mathematics \\
University of California \\ Berkeley, CA 94720-3840}
\email{rieffel@math.berkeley.edu}
\thanks{The research reported here was
supported in part by National Science Foundation grant DMS-0753228.}
\subjclass
[2000]
{Primary 46L87; Secondary 53C23, 58B34, 81R15, 81R30}
\keywords{quantum metric space, Gromov--Hausdorff distance, 
Leibniz seminorm, coadjoint orbits, matrix algebras, coherent states, Berezin symbols}

\begin{abstract}
For any sequence of matrix algebras that converge to a coadjoint
orbit we give explicit formulas that show that the distances between
the matrix algebras (viewed as quantum metric spaces) converges
to 0. In the process we develop a general point of view that is 
likely to be useful in other related settings.
\end{abstract}

\maketitle
\allowdisplaybreaks

\section*{Introduction}

In earlier papers \cite{R6, R7, R21} I provided ways to give a precise meaning
to statements in the literature of high-energy physics and string
theory of the kind ``Matrix algebras converge to the sphere''.  
I did this by equipping the matrix algebras with suitable ``Lipschitz
seminorms'' that make the matrix algebras into
``compact quantum metric spaces'', and then by defining convergence
by means of a suitable ``quantum Gromov-Hausdorff distance''
between quantum metric spaces. By now a number of variations on
this approach have been studied \cite{Krr, KrL, Lih2, Lih3, Lih4, Wuw3}.

When I then began to examine what consequences the
convergence of quantum metric spaces had for the convergence
of ``vector bundles'' (i.e. projective modules) over them  \cite{R17}, 
I found that it
is very important that the Lipschitz seminorms satisfy a suitable 
Leibniz property. In \cite{R21} I showed that a very convenient source 
for seminorms that satisfy this Leibniz property consisted of normed 
bimodules, and in \cite{R21} I also constructed explicit normed bimodules that
worked well for matrix algebras converging to coadjoint orbits.

However, for our approach to work well, it should be the case that 
for a convergent sequence of matrix algebras the quantum 
Gromov-Hausdorff distances between the matrix algebras
go to 0; but when I required that all of the seminorms satisfy the 
Leibniz property I did not see at first how to show this convergence
directly. The purpose of the present paper is to give explicit
normed bimodules and corresponding Leibniz Lipschitz seminorms 
that demonstrate this convergence to 0. In the process we
develop a general point of view that is likely to be useful in other related
situations. This point of view is motivated by the ``nuclear distance''
introduced and studied by Hanfeng Li \cite{KrL, Lih3, Lih4}, in
which all of the bimodules are required to be $C^*$-algebras. I
have so far not seen how to apply Hanfeng Li's approach directly 
to obtain explicit normed bimodules for the matrix-algebra case.
But by trying to arrange that all of the normed bimodules 
that I used were
$C^*$-algebras I was led to see the path to the explicit
bimodules that I sought.

The first section of this paper recalls the setting for matrix
algebras converging to coadjoint orbits, reformulates the
bimodules from \cite{R21} so that they are $C^*$-algebras, and
then uses these reformulated bimodules to construct candidates for
$C^*$-bimodules between matrix algebras whose Leibniz Lipschitz
seminorms might show that the distances go to 0. In Section 2
we place matters in a general framework, and obtain a basic theorem
in this general framework.   
In Section 3 we prove that the candidate bimodules and corresponding
Lipschitz seminorms of Section 1 do indeed show that the distances between
the converging matrix algebras go to 0. An important step in the proof comes 
from the general theorem in Section 2. The full statement of the main theorem
is given at the end of Section 3.


\section{The bimodules}
\label{sec1}

We recall the setting from \cite{R7, R21}.
We let $G$ be a compact connected
semisimple Lie group, and we let $\fG$
denotes the complexification of the Lie algebra of $G$.  We choose a maximal torus in
$G$, with corresponding Cartan subalgebra of $\fG$, its set of roots,
and a choice of positive roots.  We fix a specific irreducible
unitary representation, $(U,\cH)$, of $G$, and we choose a highest-weight
vector, $\xi$, for $(U,\cH)$ with $\|\xi\| = 1$.  For any $n \in \bZ_{\ge 1}$ we set
$\xi^n = \xi^{\otimes n}$ in $\cH^{\otimes n}$, and we let
$(U^n,\cH^n)$ be the restriction of $U^{\otimes n}$ to the 
$U^{\otimes n}$-invariant subspace, $\cH^n$, of $\cH^{\otimes n}$ that is
generated by $\xi^n$.  Then $(U^n,\cH^n)$ is an irreducible
representation of $G$ with highest-weight vector $\xi^n$, and its
highest weight is just $n$ times the highest weight of $(U,\cH)$.  We
denote the dimension of $\cH^n$ by $d_n$.

We let $B^n = \cL(\cH^n)$.  The action of $G$ on $B^n$ by conjugation
by $U^n$ will be denoted simply by $\a$.  We assume that a continuous
length function, $\ell$, has been chosen for $G$, and we denote the
corresponding $C^*$-metric on $B^n$ by $L_n^B$.  It is defined by
\[
L_n^B(T) = \sup\{\|\a_x(T)-T\|/\ell(x): x \notin e_G\}
\]
for $T \in B^n$. (The term ``$C^*$-metric'' is defined in definition 4.1
of \cite{R21}.)
We let $P^n$ denote
the rank-one projection along $\xi^n$.  Then the $\a$-stability
subgroup, $H$, for $P = P^1$ will also be the $\a$-stability subgroup for
each $P^n$. We let $A = C(G/H)$, and we let $L_A$ be the $C^*$-metric
on $A$ for $\ell$ and the left-translation action of $G$ on $G/H$,
defined much as is $L_n^B$.  

Roughly speaking, our goal is to obtain estimates on the distance
between $(B^m, L_m^B)$ and $(B^n, L_n^B)$ that show that the
distance goes to 0 as $m$ and $n$ go to 0. We want to do this in the
setting of \cite{R21}, where we insist that the Lipschitz seminorms involved 
satisfy a strong Leibniz property. We require this because of its importance 
for treating vector bundles (and projective modules), as shown in \cite{R17}.

But in contrast to \cite{R21}, our presentation here is influenced by
Hanfeng Li's definition of the ``nuclear distance'' between 
quantum metric spaces, 
although I have not seen how to use his nuclear distance directly. The effect
of this influence is that we try to arrange that all of the bimodules that we 
consider are actually $C^*$-algebras.

To motivate the construction of our bimodules, we first reformulate the
corresponding constructions from \cite{R21} in terms of $C^*$-algebras. 
For any given $n$ we form the $C^*$-algebra $A\otimes B^n = 
C(G/H, B^n)$. There are canonical injections of $A$ and $B^n$ into
$A\otimes B^n$, and by means of these we view $A\otimes B^n$
as an $A$-$B^n$-bimodule. Let $\o_n \in C(G/H, B^n)$ be defined by
\[
\o_n(x) = \a_x(P^n).
\]
We use the distinguished element $\o_n$ and the bimodule structure to
define a seminorm, $N_n$, on $A\oplus B^n$ by
\[
N_n(f, T) = \|f\o_n - \o_n T\|.
\]
This seminorm is easily seen to be the same as the seminorm $N_\s$
described by other means in proposition 7.2 of \cite{R21}. It is also easy 
to see that $N_n$ satisfies the strong Leibniz property defined in 
definition 1.1 of \cite{R21}, for the reasons discussed in example 2.3
of \cite{R21} if $A\otimes B^n$ is viewed as an 
$(A\otimes B^n)$-bimodule in the evident way.

For a suitable choice of the constant $\g$, as discussed in 
propositions 8.1 and 8.2 of \cite{R21}, $\g^{-1}N_n$ is a bridge, as 
defined in definition 5.1 of \cite{R6}. This implies that the $*$-seminorm
$L_n$ on $A\oplus B^n$ defined by
\[
L_n(f, T) = L_A(f) \vee L^B_n(T) \vee \g^{-1}(N_n(f, T) \vee N_n(\bar f, T^*))
\]
is a $C^*$-metric on $A\oplus B^n$ (where $\vee$ means ``maximum of'') 
that has the further property that its quotients 
on $A$ and $B^n$ agree with $L^A$ and $L^B_n$ on self-adjoint
elements. (See notation 5.5 and definition 6.1 of \cite{R21}.) 
This quotient condition
on seminorms is exactly what we required in \cite{R6, R7, R21} in order to 
define distances between $C^*$-algebras such as $A$ and $B^n$.
Specifically, for our situation, let $S(A)$ denote the state space of $A$,
and similarly for $B^n$ and $A\oplus B^n$. Then $S(A)$ and $S(B^n)$
are naturally viewed as subsets of $S(A\oplus B^n)$. Now $L_n$
defines a metric, $\rho_{L_n}$, on $S(A\oplus B^n)$ by
\[
\rho_{L_n}(\mu, \nu) = \sup\{|\mu(f, T) - \nu(f, T)|:L_n(f, T) \leq 1\} .
\]
(By definition this supremum should be taken over just self-adjoint 
$f$ and $T$, but by the comments made just before definition 2.1 of \cite{R6} it 
can equivalently be
taken over all $f$ and $T$ because $L_n$ is 
a $*$-seminorm. This fact is also used later for other $*$-seminorms.)
The corresponding ordinary Hausdorff distance
\[
\mathrm{dist}_H^{\rho_{L_n}}(S(A), S(B^n))
\]
gives, by definition, an upper bound for $\mathrm{dist}_q(A, B^n)$
as defined in definition 4.2 of \cite{R6} when we don't require the
strong Leibniz condition, and for $\mathrm{prox}(A, B^n)$ as
defined in definition 5.6 of \cite{R21} when we do require the
strong Leibniz condition. It is shown in theorem 4.3 of \cite{R6}
that $\mathrm{dist}_q$ satisfies the triangle inequality. But
$\mathrm{prox}$ probably does not satisfy the triangle inequality, basically because the quotient of a seminorm that satisfies the Leibniz condition 
need not satisfy the Leibniz condition. We always have 
$\mathrm{dist}_q(A, B) \leq \mathrm{prox}(A, B)$, so if we can show that
$\mathrm{prox}(A, B)$ is ``small'' than it follows that $\mathrm {dist}_q(A, B)$
is ``small'' too.

Since for our specific situation  $\mathrm{prox}(A, B^n)$ converges to 0 
as $n$ goes to $\infty$, as seen in theorem 9.1 of \cite{R21}, (and
similarly for its matricial version, $\mathrm{prox}_s$, by theorem 14.1
of \cite{R21}), it is natural to expect that $\mathrm{prox}(B^m, B^n)$
converges to 0 as $m$ and $n$ go to $\infty$. But because we can
not invoke the triangle inequality, we need to give a direct proof of this fact. 
In the process of doing this
we will construct a specific seminorm that gives quantitative
estimates.

Towards our goal we seek to construct a suitable $B^m$-$B^n$-bimodule.
We can, of course, view the $C^*$-algebra $A\otimes B^m$ as being
the $B^m$-$A$-bimodule $B^m\otimes A$, and then it is natural to
form an ``amalgamation'' over $A$ of these two $C^*$-algebras, to
obtain the $B^m$-$B^n$-bimodule
\[
(B^m\otimes A)\otimes_A(A\otimes B^n) = B^m\otimes A\otimes B^n ,
\]
which we can view as $C(G/H, B^m\otimes B^n)$. Notice that this
is again a $C^*$-algebra, and that we have natural injections of $B^m$ and 
$B^n$ into it. Inside this bimodule we choose a distinguished element,
namely $\o_{mn} = \o_m\otimes \o_n$, viewed as defined by
\[
\o_{mn}(x) = \a_x(P^m)\otimes \a_x(P^n) = \a_x(P^m\otimes P^n).
\]
In terms of $\o_{mn}$ we define a seminorm, $N_{mn}$, on 
$B^m \oplus B^n$ by
\[
N_{mn}(S, T) = \|S\o_{mn} - \o_{mn}T\|,
\]
where the norm is that of the $C^*$-algebra $C(G/H, B^m\otimes B^n)$.
We can now hope to find constants $\g$ such that $\g^{-1}N_{mn}$ is 
a bridge between $B^m$ and $B^n$. In the 
next section we describe a more general setting within which to choose
such bridge constants.


\section{the bridge constants}
\label{sec2}
In this section we consider the following more general setting. We
are given three compact $C^*$-metric spaces, $(A, L_A)$, $(B, L_B)$
and $(C, L_C)$. We are also given unital $C^*$-algebras $D$ and $E$ together
with injective unital homomorphisms of $A$ and $B$ into $D$, and of $B$ and $C$ into $E$.
(Actually, we do not need the unital homomorphisms to be injective, but then we
should provide notation for them, and that would clutter our calculations.) 
Thus we can consider $D$ to be an $A$-$B$-bimodule and $E$ to be a
$B$-$C$-bimodule. We assume further that we are given distinguished elements
$d_0$ and $e_0$ of $D$ and $E$ respectively. For convenience we assume
that $\|d_0\| = 1 = \|e_0\|$. We then define seminorms $N_D$
and $N_E$ on $A\oplus B$ and $B\oplus C$ by
\[
N_D(a,b) = \|ad_0 - d_0b\|_D 
\]
and similarly for $N_E$. We assume that there are constants $\g_D$ and $\g_E$
such that $\g_D^{-1}N_D$ and $\g_E^{-1}N_E$ are bridges for $(L_A, L_B)$
and $(L_B, L_C)$ respectively. This means that when we form the $*$-seminorm
\[
L_{AB}(a,b) = L_A(a)\vee L_B(b)\vee \g_D^{-1}(N_D(a, b)\vee N_D(a^*, b^*), 
\]
its quotients on $A$ and $B$ agree with
$L_A$ and $L_B$ on self-adjoint elements, 
and similarly for $L_{BC}$. Note that $L_{AB}$ 
and $L_{BC}$ are $C^*$-metrics by theorem 6.2 of \cite{R21}.

Motivated by Hanfeng Li's treatment of his nuclear distance \cite{Lih4}, we
consider any amalgamation, $F$, of $D$ and $E$ over $B$. This means
that there are unital injections of $D$ and $E$ into $F$ whose compositions
with the injections of $B$ into $D$ and $E$ coincide. We denote the images 
of $d_0$ and $e_0$ in $F$ again by $d_0$ and $e_0$, and we set 
$f_0 = d_0e_0$. Unfortunately in this generality it could happen that
$f_0 = 0$. (In Hanfeng Li's definition of his nuclear distance this problem 
does not occur since his distinguished elements are, implicitly, the
identity elements.)

\begin{theorem}
\label{thm2.1}
Let notation be as above, and assume that $f_0 \neq 0$. View $F$
as an $A$-$C$-bimodule in the evident way, and define a seminorm,
$N_F$, on $A\oplus C$ by
\[
N_F(a,c) = \|af_0 - f_0c\|_F .
\]
Then for any $\g \geq \g_D + \g_E$ the seminorm $\g^{-1} N_F$ is a bridge for
$(L_A, L_C)$.
\end{theorem}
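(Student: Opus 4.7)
The plan is to verify directly that the quotients of the $*$-seminorm
\[
L_{AC}(a,c) = L_A(a)\vee L_C(c)\vee \g^{-1}\bigl(N_F(a,c)\vee N_F(a^*,c^*)\bigr)
\]
on $A$ and on $C$ agree with $L_A$ and $L_C$ on self-adjoint elements. By the symmetry of the roles of $A$ and $C$ in the construction, it suffices to treat the $A$-side. Since $L_{AC}(a,c)\ge L_A(a)$ for every $c$, the nontrivial direction is to produce, for each self-adjoint $a\in A$, some $c\in C$ with $L_{AC}(a,c)\le L_A(a)$.

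First, I would use the hypothesis that $\g_D^{-1}N_D$ is a bridge for $(L_A,L_B)$: given self-adjoint $a\in A$, the quotient condition on $A$ yields $b\in B$ with $L_B(b)\le L_A(a)$ and $N_D(a,b)\vee N_D(a^*,b^*)\le \g_D L_A(a)$. Replacing $b$ by $(b+b^*)/2$ and using that $a=a^*$ and $L_B$ is a $*$-seminorm, I may assume $b$ self-adjoint while keeping the bounds $L_B(b)\le L_A(a)$ and $N_D(a,b)\le\g_D L_A(a)$. Applying the same argument to the bridge $\g_E^{-1}N_E$ for $(L_B,L_C)$ starting from this $b$, I obtain a self-adjoint $c\in C$ with $L_C(c)\le L_B(b)\le L_A(a)$ and $N_E(b,c)\le \g_E L_A(a)$.

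The core of the proof is then the telescoping identity in $F$, which uses nothing more than $f_0=d_0e_0$ and the fact that $b$ sits in both $D$ and $E$ via the common embedding of $B$:
\[
af_0-f_0c \;=\; ad_0e_0 - d_0e_0c \;=\; (ad_0-d_0b)e_0 + d_0(be_0-e_0c).
\]
Because the unital injections $D\hookrightarrow F$ and $E\hookrightarrow F$ are isometric, the terms $ad_0-d_0b$ and $be_0-e_0c$ have the same norms in $F$ as in $D$ and $E$. Using $\|d_0\|=\|e_0\|=1$, the triangle inequality yields
\[
N_F(a,c) \;\le\; N_D(a,b)\,\|e_0\| + \|d_0\|\,N_E(b,c) \;\le\; (\g_D+\g_E)L_A(a) \;\le\; \g L_A(a).
\]
Since $a$ and $c$ were chosen self-adjoint, $N_F(a^*,c^*)=N_F(a,c)$, so $\g^{-1}\bigl(N_F(a,c)\vee N_F(a^*,c^*)\bigr)\le L_A(a)$ and therefore $L_{AC}(a,c)\le L_A(a)$, as required. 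The analogous construction, starting from self-adjoint $c\in C$ and producing a matching self-adjoint $a\in A$ by traversing $E$ then $D$, handles the quotient on $C$.

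I do not expect a real obstacle: the only non-routine point is the self-adjointness reduction, handled by the averaging trick, and the key inequality is just a triangle-inequality split of the telescoping identity above. The whole reason one amalgamates over $B$ is precisely so that the same $b$ is simultaneously visible as $d_0b$ inside $D$ and as $be_0$ inside $E$, which is what makes this split make sense in $F$.
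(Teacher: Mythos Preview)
Your argument is essentially the paper's: the same telescoping
\[
af_0 - f_0c = (ad_0-d_0b)e_0 + d_0(be_0-e_0c)
\]
combined with $\|d_0\|=\|e_0\|=1$ gives $N_F(a,c)\le N_D(a,b)+N_E(b,c)$, and one then feeds in the two bridge hypotheses. One small correction: the bridge condition (definition~5.1 of \cite{R6}) only guarantees, for each $\e>0$, a self-adjoint $b$ with $L_B(b)\vee\g_D^{-1}N_D(a,b)\le L_A(a)+\e$; the quotient infimum need not be attained exactly. The paper accordingly carries an $\e$ through and obtains $N_F(a,c)\le(\g_D+\g_E)L_A(a)+\e(\g_D+2\g_E)$, which is enough. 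Two further minor points: the paper also checks explicitly that $f_0\ne 0$ gives $N_F(1_A,0_C)\ne 0$ (the first bridge condition), which you omit; and since the bridge definition already hands you a self-adjoint $b$, your averaging step is harmless but unnecessary.
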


\begin{proof}
It is clear that
$\g^{-1}N_F(1_A, 0_C) \neq 0$ since $f_0 \neq 0$, and that $\g^{-1}N_F$ 
is norm-continuous. Thus the first two 
conditions of definition 5.1 of \cite{R6} are satisfied. We must verify
the third, final, condition. To simplify notation, we identify $A$, $B$,
$C$, $D$ and $E$ with their images in $F$. For any $a \in A$, $b \in B$
and $c \in C$ we have
\begin{align*}
N_F(a, c) &= \|af_0 - f_0c\|_F  
\leq \|ad_0e_0 - d_0be_0\|_F + \|d_0be_0 - d_0e_0c\|_F \\
&\leq \|ad_0 - d_0b\|_D\|e_0\|_E + \|d_0\|_D\|be_0 - e_0c\|_E \\
&= N_D(a,b) + N_E(b,c) .
\end{align*}
Now let $a \in A$ with $a = a^*$ be given, and let $\e > 0$ be given.
Since $\g_D^{-1}N_D$ is a bridge for $(L_A, L_B)$, there is by definition 
a $b \in B$ with $b^* =b$ such that
\[
L_B(b)\vee \g_D^{-1}N_D(a, b) \leq L_A(a) + \e .
\]
Then since $\g_E^{-1}N_E$ is a bridge for $(L_B, L_C)$, there is a 
$c \in C$ with $c^*=c$ such that 
\[
L_C(c)\vee \g_D^{-1}N_D(b, c) \leq L_B(b) + \e .
\]
Consequently
\[
L_C(c) \leq L_B(b) + \e \leq L_A(a) + 2\e ,
\]
and, from the earlier calculation,
\begin{align*}
N_F(a,c) &\leq N_D(a,b)+N_E(b,c) \leq \g_D(L_A(a) + \e) + \g_E(L_B(b)+\e)  \\
&\leq (\g_D+\g_E)L_A(a) \ + \ \e(\g_D + 2\g_E) .
\end{align*}
The situation is basically symmetric between $A$ and $C$, so one can make 
a similar calculation but starting with a $c \in C$ to obtain a $b \in B$ and
then an $a \in A$ satisfying the corresponding inequalities. This shows that
$(\g_D + \g_E)^{-1}N_F$ is indeed a bridge. Then also $\g^{-1}N_F$ will be
a bridge for any $\g \geq \g_D + \g_E$.
\end{proof}

However, I have so far not seen any good general conditions that yield 
estimates showing that if the corresponding seminorm
\[
L_{AB} = L_A\vee L_B\vee \g^{-1}(N_D\vee N^*_D)
\]
brings $(A,L_A)$ and $(B, L_B)$ close together, and similarly for $L_{BC}$,
then $L_{AC}$ using $(\g_D + \g_E)^{-1}N_F$ brings $(A, L_A)$ and
$(C, L_C)$ close together, in the sense that 
$\mathrm{dist}_H^{\rho_{L_{AC}}}(S(A), S(C))$ is small.
In Hanfeng Li's nuclear distance, in which the distinguished elements
are all, implicitly, the identity elements, this aspect works much better.
And since the nuclear distance satisfies the triangle inequality, it
is clear that $\mathrm{dist}_{nu}(B^m, B^n)$ converges to 0 as
$m$ and $n$ go to $\infty$. But so far I find the nuclear distance to be 
more elusive, as I discuss briefly in section 6 of \cite{R21}, though it is
certainly attractive. I do not yet see how to obtain for the nuclear
distance the kind of quantitative estimates that we will obtain
here for $\mathrm{prox}$.


\section{The proof and statement of the main theorem}
\label{sec3}
For the context of Section 1 the role of $F$ of Section 2 is played by
$C(G/H, B^m\otimes B^n)$, while the roles of $d_0$ and $e_0$ are
played by $\o_m$ and $\o_n$, with $f_0$ being $\o_{mn}$. Let
$\g^A_m$ be defined as in proposition 8.1 of \cite{R21} but for
$P=P^m$, and let $\g^B_m$ be defined as in proposition 8.2
of \cite{R21} but for $P=P^m$. Let $\g_m = \max\{\g^A_m, \g^B_m\}$.
All that we need to know here about $\g_m$ is that propositions 8.1
and 8.2 of \cite{R21} tell us that, for $N_m$ as defined in Section 1 above,
$\g_m^{-1}N_m$ is a bridge for $(L_A, L^B_m)$, and that
propositions 10.1 and 12.1 of \cite{R21} tell us that $\g_m$ 
converges to 0 as $m$ goes to $\infty$. From Theorem 2.1 above
and from the identifications made above, it follows immediately that 
for any $\g$ with $\g \geq \g_m + \g_n$ the seminorm $\g^{-1}N_{mn}$
is a bridge for $(L^B_m, L^B_n)$.

We now investigate how close $S(B^m)$ and $S(B^n)$ are in the
metric from the corresponding seminorm $L_{mn}$ on $B^m\oplus B^n$.
Given $\mu \in S(B^m)$, we want a systematic way to find a
$\nu \in S(B^n)$ that is ``relatively close'' to $\mu$. For this purpose
we use the Berezin symbols $\s^n$ and $\breve \s^n$ that we used
in \cite{R7, R21}. We recall that $\s^n$ is the completely positive unital 
map from $B^n$ to $A$ defined by $\s_T^n(x) = \tr(\a_x(P^n)T)$,
while $\breve \s^n$ is the completely positive unital map from $A$
to $B^n$ defined by
\[
\breve \s^n_f = d_n \int_{G/H} f(x) \a_x(P^n) dx , 
\]
where we recall that $d_n$ is the dimension of $\cH^n$, and the
$G$-invariant measure on $G/H$ gives $G/H$ measure 1. Then
$\breve \s^m \circ \s^n$ will be a completely positive unital map from 
$B^n$ to $B^m$, whose transpose will map $S(B^m)$ into
$S(B^n)$, for any $m$ and $n$. For any $T \in B^n$ we have
\[
\breve \s^m(\s^n_T) = d_m \int_{G/H} \a_x(P^m) \tr(\a_x(P^n)T) dx .
\]

Let $N_{mn}$ be the seminorm on $B^m\oplus B^n$ determined by
$\o_{mn}$, so that 
\begin{align*}
&N_{mn}(S, T) = \|S\o_{mn} - \o_{mn}T\|   \\
&= \sup\{\|(S\otimes I_n)\a_x(P^m\otimes P^n) -  \a_x(P^m\otimes P^n)(I_m\otimes T)\|: x \in G/H\}  .
\end{align*}
Then $L_{mn}$ is defined on $B^m \oplus B^n$ by
\[
L_{mn}(S,T) = L_m^B(S) \vee L_n^B(T)\vee \g^{-1}(N_{mn}(S,T) \vee
N_{mn}(S^*, T^*)) 
\]
for some $\g \geq \g_m + \g_n$.
Let $\mu \in S(B^m)$ be given, and as state $\nu \in S(B^n)$ potentially
close to $\mu$ we choose $\nu$ to be defined by $\nu(T) = \mu(\breve \s^m(\s^n_T))$.
We then want an upper bound on $\rho_{L_{mn}}(\mu, \nu)$.
Now
\[
\rho_{L_{mn}}(\mu, \nu) = \sup\{|\mu(S) - \nu(T)|:L_{mn}(S, T) \leq 1\} ,
\]
and
\[
|\mu(S) - \nu(T)| = |\mu(S) - \mu(\breve \s^m(\s^n_T))| \leq ||S - \breve \s^m(\s^n_T)|| .
\]
So we need to understand what the condition $L_{mn}(S,T) \leq 1$ implies for
$||S - \breve \s^m(\s^n_T)||$. This seems difficult to do directly, so we use a little 
gambit that we have used before, e.g. shortly before notation 8.4 of \cite{R21}, namely
\begin{align*}
||S - \breve \s^m(\s^n_T)|| &\leq ||S - \breve \s^m(\s^m_S)|| + 
|| \breve \s^m(\s^m_S) - \breve \s^m(\s^n_T)||    \\
&\leq \d^B_m L^B_m(S) + \|\s^m_S - \s^n_T\|_\infty  ,
\end{align*}
where for the last inequality we have used theorem 11.5 of \cite{R21}, which includes
the definition of $\d^B_m$. (We remark that theorem 11.5 of \cite{R21} is the
same as theorem 6.1 of \cite{R7}, but \cite{R21} gives a simpler proof of this theorem.)
Note that  $L_{mn}(S,T) \leq 1$ implies that $L^B_m(S) \leq 1$.
Thus we see that it is  $\|\s^m_S - \s^n_T\|_\infty $ that we need to control.
In preparation for this we establish some additional notation in order to
put the situation into a comfortable setting. Notice that 
$B^m\otimes B^n = \cL(\cH^m\otimes \cH^n)$. Furthermore $\xi^m\otimes \xi^n$
is a highest-weight vector in $(U^m\otimes U^n, \cH^m\otimes \cH^n)$, and its weight 
is just the sum of the highest weights of $(U^m\otimes \cH^m)$ and $(U^n\otimes \cH^n)$,
which is just the highest weight of $(U, \cH)$ multiplied by $m+n$. Thus 
$\xi^m\otimes \xi^n$ is just the highest-weight vector for a copy of
$(U^{m+n}, \cH^{m+n})$ inside $\cH^m\otimes \cH^n$.  To simplify notation
we now just set $\xi^{m+n} = \xi^m\otimes \xi^n$, and view $ \cH^{m+n}$
as being the $G$-invariant subspace of $\cH^m\otimes \cH^n$ generated by $\xi^{m+n}$.
Then the rank-1 projection $P^{m+n}$ on $\xi^{m+n}$ is exactly $P^m\otimes P^n$.
We let $\Pi^{mn}$ denote the projection from $\cH^m\otimes \cH^n$ onto $\cH^{m+n}$.
Our notation will not distinguish between viewing the domain of $P^{m+n}$ as being
$\cH^m\otimes \cH^n$ or as being $\cH^{m+n}$, and we will use 
below the fact that $\a_x(P^{m+n} )= \a_x(P^{m+n}) \Pi^{mn}$ 
for any $x \in G$.

\begin{lemma}
\label{lm3.1}
For any $S \in B^m$ and $T \in B^n$ we have
\[
\s^m_S - \s^n_T = \s_R^{m+n}
\]
where $R=\Pi^{mn}(S\otimes I_n \ - \ I_m\otimes T)\Pi^{mn}$, viewed
as an element of $B^{m+n}$.
\end{lemma}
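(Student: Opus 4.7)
The plan is to compute $\s^{m+n}_R(x) = \tr(\a_x(P^{m+n}) R)$ directly and reduce it to $\s^m_S(x) - \s^n_T(x)$ by exploiting the tensor factorizations set up in the paragraph preceding the lemma. The key identification is that when $P^{m+n}$ is viewed as an operator on $\cH^m \otimes \cH^n$, it equals $P^m \otimes P^n$, since both are the rank-one projection onto $\bC \xi^{m+n} = \bC(\xi^m \otimes \xi^n)$. Conjugating by $U^m_x \otimes U^n_x$ (whose restriction to $\cH^{m+n}$ is $U^{m+n}_x$) then gives $\a_x(P^{m+n}) = \a_x(P^m) \otimes \a_x(P^n)$ as operators on $\cH^m \otimes \cH^n$.

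Next I would substitute $R = \Pi^{mn}(S \otimes I_n - I_m \otimes T)\Pi^{mn}$ into the trace. Because $\a_x(P^{m+n})$ has range inside $\cH^{m+n}$ and $R$ annihilates $(\cH^{m+n})^\perp$ thanks to the inner $\Pi^{mn}$, the trace of $\a_x(P^{m+n}) R$ on $\cH^{m+n}$ coincides with the trace of the same product on the larger space $\cH^m \otimes \cH^n$. Using cyclicity of the trace together with the identity $\a_x(P^{m+n})\Pi^{mn} = \a_x(P^{m+n}) = \Pi^{mn}\a_x(P^{m+n})$ noted just before the lemma, both copies of $\Pi^{mn}$ are absorbed, giving
\begin{equation*}
\s^{m+n}_R(x) = \tr\bigl((\a_x(P^m) \otimes \a_x(P^n))(S \otimes I_n - I_m \otimes T)\bigr).
\end{equation*}
Expanding the bracket and applying multiplicativity of the trace across the tensor decomposition, together with $\tr(\a_x(P^m)) = \tr(\a_x(P^n)) = 1$ (since $P^m, P^n$ are rank-one projections and $\a_x$ is trace-preserving), produces $\tr(\a_x(P^m)S) - \tr(\a_x(P^n)T) = \s^m_S(x) - \s^n_T(x)$, as required.

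The only step meriting care is the interchange between the trace on $\cH^{m+n}$ and on $\cH^m \otimes \cH^n$, which is precisely why the projection identity $\a_x(P^{m+n}) = \a_x(P^{m+n})\Pi^{mn}$ was flagged in the paragraph preceding the lemma. Once that identity is in hand, everything else is a routine computation with rank-one projections on a tensor product, so I do not anticipate any genuine obstacle.
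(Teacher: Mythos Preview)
Your proposal is correct and follows essentially the same route as the paper's proof: both rely on the identification $\a_x(P^{m+n}) = \a_x(P^m)\otimes \a_x(P^n)$ as operators on $\cH^m\otimes\cH^n$, the absorption of $\Pi^{mn}$ via $\a_x(P^{m+n})\Pi^{mn} = \a_x(P^{m+n})$, and the multiplicativity of the trace across the tensor factorization together with $\tr(\a_x(P^m)) = \tr(\a_x(P^n)) = 1$. The only cosmetic difference is that the paper runs the computation from $\s^m_S - \s^n_T$ toward $\s^{m+n}_R$ (sandwiching with $\a_x(P^m\otimes P^n)$ on both sides before passing to $\Pi^{mn}$), whereas you go in the opposite direction; the content is the same.
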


\begin{proof}
For any $x \in G$ we have
\begin{align*}
&\s_S(x) - \s_T(x) = \tr^m(\a_x(P^m)S) - \tr^n(T\a_x(P^n))   \\
&=(\tr^m\otimes \tr^n)(\a_x(P^m\otimes P^n)(S\otimes I_n - I_m\otimes T)
\a_x(P^m\otimes P^n))   \\
&= \tr^{m+n}(\a_x(P^{m+n})\Pi^{mn}(S\otimes I_n - I_m\otimes T)\Pi^{mn})   \\
&= \s_R^{m+n}(x)  .
\end{align*}
\end{proof}

Notice now that for $R$ defined as just above, because the rank of $P^{m+n}$ is 1,
we have for any $x \in G$
\begin{align*}
|\s^{m+n}_R(x)| &= |\tr^{m+n}(\a_x(P^{m+n})\Pi^{mn}(S\otimes I_n - I_m\otimes T)\Pi^{mn})|  \\
&= \|\a_x(P^{m+n})(S\otimes I_n - I_m\otimes T)
\a_x(P^{m+n})\|  \\
&\leq  \|\a_x(P^{m+n})(S\otimes I_n) - (I_m\otimes T)
\a_x(P^{m+n})\|  ,
\end{align*}
and consequently
\[
\|\s^{m+n}_R\| \leq N_{mn}(S^*, T^*) .
\]
But if $L_{mn}(S, T) \leq 1$, then $N_{mn}(S^*, T^*) \leq \g_m + \g_n$ if we
have taken $\g = \g_m + \g_n$. Thus we find that
\[
|\mu(S) - \nu(T)| \leq \d^B_m + \g_m + \g_n .
\]
Since the situation is symmetric in $m$ and $n$, we conclude that
\[
\mathrm{dist}_H^{\rho_{L_{mn}}}(S(B^m), S(B^n)) \leq
\max\{\d^B_m, \d^B_n\} +\max\{\g^A_m, \g^B_m\} + \max\{\g^A_n, \g^B_n\}.
\]
As mentioned in part above, it is shown in proposition 10.1, theorem 11.5, 
and proposition 12.1 of \cite{R21} that, respectively, $\g^A_m$, $\d^B_m$, 
and $\g^B_m$ all converge to 0 as $m$ goes to $\infty$. We thus obtain 
the main theorem of this paper:

\begin{theorem}
With notation as above, for all $m$ and $n$ we have
\[
\mathrm{prox}(B^m, B^n) \leq 
\max\{\d^B_m, \d^B_n\} +\max\{\g^A_m, \g^B_m\} + \max\{\g^A_n, \g^B_n\} ,
\]
and in particular, $\mathrm{prox}(B^m, B^n)$ converges to 0 as $m$ and $n$
go to $\infty$.
\end{theorem}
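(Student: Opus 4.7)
The plan is to apply Theorem~\ref{thm2.1} from Section~2 with $F := C(G/H, B^m \otimes B^n)$, distinguished element $f_0 := \o_{mn} = \o_m \otimes \o_n$, and bridge constants $\g_m, \g_n$ coming from propositions~8.1 and~8.2 of \cite{R21}. This immediately yields that $\g^{-1} N_{mn}$ is a bridge for $(L^B_m, L^B_n)$ whenever $\g \geq \g_m + \g_n$, so that the associated $*$-seminorm $L_{mn}$ is a $C^*$-metric on $B^m \oplus B^n$ by theorem~6.2 of \cite{R21}. The remaining task, and the real content of the theorem, is a quantitative estimate on $\dist_H^{\rho_{L_{mn}}}(S(B^m), S(B^n))$, which by the discussion in Section~1 dominates $\prox(B^m, B^n)$.

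For the Hausdorff bound, I would use the Berezin symbols $\s^m$ and $\breve \s^m$ of \cite{R7, R21} to construct a canonical candidate: given $\mu \in S(B^m)$, set $\nu(T) := \mu(\breve \s^m(\s^n_T))$, which is a state on $B^n$ because $\breve \s^m \circ \s^n$ is unital completely positive. Given any $(S,T)$ with $L_{mn}(S,T) \leq 1$, I would bound $|\mu(S) - \nu(T)|$ by $\|S - \breve \s^m(\s^n_T)\|$, and then insert $\breve \s^m(\s^m_S)$ as a middle term. The first piece, $\|S - \breve \s^m(\s^m_S)\|$, is controlled by $\d^B_m L^B_m(S) \leq \d^B_m$ via theorem~11.5 of \cite{R21}. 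The second piece, $\|\breve \s^m(\s^m_S) - \breve \s^m(\s^n_T)\|$, is dominated by $\|\s^m_S - \s^n_T\|_\infty$ since $\breve \s^m$ is a unital positive map and hence contractive.

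The main obstacle is relating $\|\s^m_S - \s^n_T\|_\infty$ back to the bridge seminorm $N_{mn}(S^*, T^*)$, which is the only part of the hypothesis $L_{mn}(S,T) \leq 1$ that couples $S$ and $T$. The key is the observation that $\xi^m \otimes \xi^n$ is a highest-weight vector inside $\cH^m \otimes \cH^n$ generating a copy of $(U^{m+n}, \cH^{m+n})$, with the crucial property that $P^{m+n} = P^m \otimes P^n$ has rank one. I would then prove a small lemma identifying $\s^m_S - \s^n_T$ with $\s^{m+n}_R$ for $R = \Pi^{mn}(S \otimes I_n - I_m \otimes T)\Pi^{mn}$, by a direct computation folding the two partial traces into a single trace against $\a_x(P^{m+n})$. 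Exploiting the rank-one property to rewrite $|\s^{m+n}_R(x)|$ as an operator norm of a two-sided product, one obtains the pointwise estimate $|\s^{m+n}_R(x)| \leq \|\a_x(P^{m+n})(S \otimes I_n) - (I_m \otimes T)\a_x(P^{m+n})\| \leq N_{mn}(S^*, T^*)$, so that under $L_{mn}(S,T) \leq 1$ one has $\|\s^m_S - \s^n_T\|_\infty \leq \g_m + \g_n$.

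Combining the three bounds gives $|\mu(S) - \nu(T)| \leq \d^B_m + \g_m + \g_n$, and by the evident $m \leftrightarrow n$ symmetry of the construction,
\[
\dist_H^{\rho_{L_{mn}}}(S(B^m), S(B^n)) \leq \max\{\d^B_m, \d^B_n\} + \g_m + \g_n,
\]
which, upon unfolding $\g_m = \max\{\g^A_m, \g^B_m\}$, is exactly the stated inequality. Convergence to $0$ is then immediate from propositions~10.1 and~12.1 together with theorem~11.5 of \cite{R21}, which provide $\g^A_m, \g^B_m, \d^B_m \to 0$ as $m \to \infty$.
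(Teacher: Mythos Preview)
Your proposal is correct and follows essentially the same route as the paper's own proof: the same amalgamated bimodule and application of Theorem~\ref{thm2.1}, the same Berezin-symbol candidate state $\nu = \mu \circ \breve\s^m \circ \s^n$, the same middle-term insertion of $\breve\s^m(\s^m_S)$ controlled by theorem~11.5 of \cite{R21}, and the same key identification $\s^m_S - \s^n_T = \s^{m+n}_R$ together with the rank-one trick to bound $\|\s^{m+n}_R\|_\infty$ by $N_{mn}(S^*,T^*)$. There is no substantive difference in method or in the constants obtained.
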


One can also obtain matricial versions of this theorem along the lines
discussed in section 14 of \cite{R21}.


\def\dbar{\leavevmode\hbox to 0pt{\hskip.2ex \accent"16\hss}d}
\providecommand{\bysame}{\leavevmode\hbox to3em{\hrulefill}\thinspace}
\providecommand{\MR}{\relax\ifhmode\unskip\space\fi MR }
\providecommand{\MRhref}[2]{%
  \href{http://www.ams.org/mathscinet-getitem?mr=#1}{#2}
}
\providecommand{\href}[2]{#2}



\end{document}